\newcommand{\R}{\mathbb R}
\renewcommand{\geq}{\geqslant}
\renewcommand{\leq}{\leqslant}
\def\D{\Delta_x}
\def\Dx{\Delta_x}
\def\Cal{\mathcal}
\def\({\left(}
\def\){\right)}
\def\Nx{\nabla_x}
\def\Dt{\partial_t}
\def\Bbb{\mathbb}
\begin{document}
\newtheorem{The}{Theorem}[section]
\newtheorem{Lem}{Lemma}[section]
\newtheorem{Def}{Definition}[section]
\newtheorem{Cor}{Corollary}[section]
\newtheorem{Rem}{Remark}[section]

\numberwithin{equation}{section}

\title{Recent progress in attractors for quintic wave
equations}

\author{\|Anton |Savostianov|, Guildford,
        \|Sergey |Zelik|, Guildford}



\abstract
   We report on new results concerning the global well-posedness, dissipativity and attractors of the damped quintic wave equations in bounded domains of $\R^3$.
\endabstract

\keywords
   damped wave equation, fractional damping, critical nonlinearity, global attractor, smoothness
\endkeywords

\subjclass
35B40, 35B45
\endsubjclass

\thanks
   This work is partially supported by the Russian Ministry of Education and Science (contract no.
8502).
\endthanks

\section{Introduction}\label{sec1}
We consider the following non-linear damped wave equation
\begin{equation}
\label{eq dw}
\begin{cases}
\Dt ^2u+\gamma(-\D)^\theta\Dt u -\D  u+f(u)=g(x),\\
u|_{t=0}=u_0,\quad \Dt u|_{t=0}=u'_0
\end{cases}
\end{equation}
in a bounded smooth domain $\Omega\subset\R^3$ endowed with Dirichlet boundary conditions. Here $\D$ is the Laplacian with respect to the variable $x=(x^1,x^2,x^3)$, $\theta\in[0,1]$ and $\gamma>0$ are  given exponents, $g\in L^2(\Omega)$ is a given external force and $f$ is a nonlinearity which satisfy some natural dissipativity and growth assumptions, say,
\begin{equation}\label{0.fext}
-C+\kappa |u|^q\le f'(u)\le C(1+|u|^q)
\end{equation}
for some positive $C$ and $q$.
\par
Wave equations of the form \eqref{eq dw} are of big  interest from both theoretical and applied points of view and has been studied by many authors, see \cite{BV, CV, carc1, carc2, carc3, Chu2010, KZwvEq2009, PataZel2006 rem, PataZel2006,tri1,tri2} and references therein. Remarkable
is that even on the linear level ($f=g=0$), these equations
demonstrate rather non-trivial analytic properties in a strong  dependence  on the value of the exponent $\theta$. For the convenience of the reader, we briefly summarize them in the following table:
  \begin{center}
	\begin{tabular}{|c|c|c|c|}
	\hline
	$\theta$ & Semigroup & Smoothing  & Maximal Regularity \\
	\hline
	0 & $C_0$ & Asymptotic & No \\
	\hline
	$\ (0,\frac{1}{2}) \ $ & $C^\infty$ & Instantaneous & No\\
	\hline
	$\ [\frac{1}{2},1) \ $ & Analytic & Instantaneous & Yes \\
	\hline
	$1$ & Analytic & \mbox{ }Instantaneous for $\Dt u$,\mbox{ } & Yes\\
	& & asymptotic for $u$ & \\ \hline
	\end{tabular}
	\end{center}
See \cite{carc3,PataZel2006, tri1,tri2} for more details. As we can see from this table, there are three important borderline cases:
the first one ($\theta=0$) corresponds to the classical (weakly) damped wave equation, the second one ($\theta=1$) gives the so-called {\it strongly damped} wave equations and the third one ($\theta=\frac12$) is often referred to as wave equation with {\it structural} damping although the intermediate choices of $\theta$ are also interesting, see e.g., \cite{carc3,KZwvEq2009,PataZel2006} and references therein.
\par
The situation becomes much more delicate in the presence of the non-linearity	$f$ since  the analytic properties of solutions start to depend also on the growth rate of $f(u)$ as $u\to\infty$ (on the exponent $q$ in \eqref{0.fext}). Recall that the solutions $u(t)$ of problem \eqref{eq dw} satisfy (at least formally) the following energy identity:
\begin{equation}\label{0.e}
\frac d{dt} E(u(t),\Dt u(t))=-\gamma\|(-\Dx)^{\theta/2}\Dt u(t)\|^2_{L^2},
\end{equation}
where $E(u,v)=\frac12\|\Dt v\|^2_{L^2}+\frac12\|\Nx u\|^2_{L^2}+(F(u),1)-(g,u)$. Here and below $(u,v)$ stands for the usual inner product in $L^2(\Omega)$ and $F(u):=\int_0^uf(v)\,dv$ is a potential of the nonlinearity $f$. Thus, a weak {\it energy} solution of problem \eqref{eq dw} on the interval $t\in[0,T]$ is naturally defined as a function $u(t)$ which has the following regularity:
\begin{multline}
u\in L^\infty(0,T;H^1_0(\Omega)\cap L^{q+2}(\Omega)),\ \Dt u\in L^\infty(0,T;L^2(\Omega)),\\ (-\Dx)^{\theta/2}\Dt u\in L^2(0,T;L^2(\Omega))
\end{multline}
(here $H^s(\Omega)$ stands for the usual Sobolev space of distributions whose derivatives up to order $s$ belong to $L^2$ and $H^s_0(\Omega)$ is the closure of $C^\infty_0(\Omega)$ in $H^s(\Omega)$)
and satisfies equation \eqref{eq dw} in the sense of distributions. The corresponding energy phase space~is
\begin{equation}\label{0.energy}
\Cal E:=[H^1_0(\Omega)\cap L^{q+2}(\Omega)]\times L^2(\Omega),\ \ \xi_u:=(u,\Dt u)\in\Cal E.
\end{equation}
Recall also that, due to the Sobolev embedding $H^1_0\subset L^6$, we can take $\Cal E=H^1_0(\Omega)\times L^2(\Omega)$ if the growth exponent $q\le 4$ (in particular, for the case of quintic nonlinearities), but the term $L^{q+2}$ in the definition \eqref{0.energy} of the energy phase space looks unavoidable if the growth exponent $q>4$. The next standard result shows that weak energy solutions exist and are dissipative for all admissible values of $\theta$ and $q$.
\begin{The}\label{Th0.ensol} Let $\theta\in[0,1]$, $q\ge0$, $g\in L^2(\Omega)$, the nonlinearity $f$ satisfy \eqref{0.fext} and the initial data
$\xi_u(0):=(u_0,u_0')\in\Cal E$. Then, there exists at least one weak energy solution $u(t)$ defined for all $t\ge0$ which satisfies the following estimate:
\begin{equation}\label{0.enest}
\|\xi_u(t)\|_{\Cal E}+\int_t^{t+1}\|(-\Dx)^{\theta/2}\Dt u(s)\|^2_{L^2}\,ds\le Q(\|\xi_u(0)\|_{\Cal E})e^{-\alpha t}+Q(\|g\|_{L^2}),
\end{equation}
where the positive constant $\alpha$ and monotone function $Q$ are independent of $u$ and $t$.
\end{The}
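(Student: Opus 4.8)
The plan is to establish both existence and the dissipative estimate \eqref{0.enest} simultaneously, treating the existence as a consequence of a priori estimates plus a standard approximation scheme. First I would construct approximate solutions via a Galerkin scheme, projecting onto the span of the first $N$ eigenfunctions $\{e_j\}$ of $-\Dx$ with Dirichlet boundary conditions; this basis is natural since the fractional damping $(-\Dx)^\theta$ diagonalizes on it, so the regularized problem is a system of ODEs with locally Lipschitz right-hand side (after smoothing $f$ if necessary), giving local-in-time existence. The core of the argument, and the part that must be done carefully, is the derivation of uniform a priori bounds that survive the limit $N\to\infty$.

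The main a priori estimate comes from the energy identity \eqref{0.e}, which gives at once
\begin{equation}
\frac{d}{dt}E(u(t),\Dt u(t))=-\gamma\|(-\Dx)^{\theta/2}\Dt u(t)\|_{L^2}^2\le0,
\end{equation}
so $E$ is nonincreasing; coupled with the coercivity of $E$ coming from the lower bound in \eqref{0.fext} (which forces $F(u)\ge \tfrac{\kappa}{(q+1)(q+2)}|u|^{q+2}-C'$ up to absorbing the linear term $(g,u)$ by Young's and Poincar\'e's inequalities), this controls $\|\xi_u(t)\|_{\Cal E}$ in terms of the initial energy, and integrating the identity in time yields the $L^2(0,T;L^2)$ bound on $(-\Dx)^{\theta/2}\Dt u$. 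To upgrade this to the exponential decay in \eqref{0.enest} rather than a mere uniform bound, I would use the standard multiplier trick: test the equation against $\Dt u+\beta u$ for a small parameter $\beta>0$ to form a modified energy $\Cal E_\beta(t)=E(u,\Dt u)+\beta(\Dt u,u)+\tfrac{\gamma\beta}{2}\|(-\Dx)^{\theta/2}u\|^2$ that is equivalent to $E$ for $\beta$ small and satisfies a differential inequality of the form $\frac{d}{dt}\Cal E_\beta+\alpha\Cal E_\beta\le Q(\|g\|)$; Gronwall then produces the $e^{-\alpha t}$ decay toward a bounded absorbing set. The only delicate point in this multiplier step is controlling the cross term $\beta(f(u),u)$, which one handles by the lower bound in \eqref{0.fext} together with the inequality $f(u)u\ge cF(u)-C$.

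The hardest part of the whole argument is passing to the limit in the nonlinear term $f(u_N)$: the a priori bounds only give $u_N$ bounded in $L^\infty(0,T;H^1_0\cap L^{q+2})$ and $\Dt u_N$ bounded in $L^\infty(0,T;L^2)$, so $f(u_N)$ is bounded only in $L^\infty(0,T;L^{(q+2)/(q+1)})$, and one must upgrade weak-$*$ convergence of $u_N$ to a.e.\ convergence to identify the weak limit of $f(u_N)$ with $f(u)$. For this I would invoke an Aubin--Lions--Simon compactness argument: the bounds on $u_N$ and $\Dt u_N$ give compactness of $u_N$ in $C([0,T];H^{-s})\cap L^2(0,T;L^2)$, hence (after extracting a subsequence) a.e.\ convergence $u_N\to u$, which by continuity of $f$ and the uniform $L^{(q+2)/(q+1)}$ bound suffices to conclude $f(u_N)\rightharpoonup f(u)$ weakly. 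The remaining linear terms pass to the limit by weak-$*$ lower semicontinuity and standard weak convergence, the energy estimate \eqref{0.enest} survives into the limit by lower semicontinuity of the norms, and this completes the existence of a dissipative weak energy solution for all $\theta\in[0,1]$ and $q\ge0$. Note that uniqueness is \emph{not} claimed here, which is consistent with the supercritical/critical growth of $f$.
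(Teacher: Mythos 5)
Your proposal follows essentially the same route as the paper, which only sketches the argument: the dissipative estimate \eqref{0.enest} is obtained by multiplying the equation by $\Dt u+\beta u$ for a suitably small $\beta>0$ and applying Gronwall's inequality, while existence is established via Galerkin approximations (with the limit passage in the nonlinear term handled by standard compactness, as in the cited references). Your expanded treatment of the coercivity of the modified energy and of the Aubin--Lions argument for identifying $f(u)$ fills in exactly the details the paper delegates to \cite{BV,CV}, so the proposal is a correct elaboration of the paper's proof.
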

The proof of this result is straightforward. Indeed, the dissipative estimate \eqref{0.enest} formally follows by multiplication of equation \eqref{eq dw} by $\Dt u+\beta u$ for properly chosen positive constant $\beta$ followed by the Gronwall's inequality and the existence of a solution can be verified, say, by Galerkin approximations, see \cite{BV,CV}.
\par
In contrast to this, the uniqueness and further regularity of energy solutions is more difficult and requires  essential restrictions on the growth exponent $q$. To the best of our knowledge this problem has been solved before only for the values of $\theta$ and $q$ collected in the following table:
\begin{center}
\begin{tabular}{|c|c|c|c|c|c|}
\hline
$\ \theta \ $  & $0$ & $(0,\frac{1}{2})$ & $\frac{1}{2}$ & $(\frac{1}{2},\frac{3}{4})$ & $[\frac{3}{4},1]$ \\
\hline
$\ q\ \ $ & $[0,2]\ $ & ? & $\ [0,4)\ $ & $\ 0\leq\ q(\theta)<\frac{8\theta}{3-4\theta}\ $ & $\ (0,\infty)\ $ \\
\hline
\end{tabular}\\[0.2cm]
\end{center}
See \cite{BV,carc3,PataZel2006,KZwvEq2009} for more details. E.g. for the case of structural damping $\theta=1/2$, the open problem was to treat the "critical" case of quintic non-linearities $q=4$ and, for the classical case $\theta=0$ in bounded domains with Dirichlet boundary conditions the theory has been developed only for cubic and sub-cubic nonlinearities ($q\le2$) although $q=4$ has been conjectured as the critical exponent here. For domains without boundary: $\Omega=\R^3$ or $\Omega=\Bbb T^3$ (periodic boundary conditions), a reasonable theory exists for $q<4$ in \cite{feireisl,kap}. In the quintic case ($\theta=0$, $q=4$) and $\Omega=\R^3$, the global existence of regular solutions has not been known for a long time (see \cite{SS,Grill,kap1}), but the attractor theory has been not developed for this case.

The aim of these notes is to present our recent results concerning global well-posedness, dissipativity and existence of smooth global attractors for quintic ($q=4$) wave equations in bounded domains, see \cite{ASqdw} and \cite{KSZ} for a detailed exposition. We restrict ourselves to discuss only two borderline cases $\theta=\frac12$ and $\theta=0$ although our conjecture is that the analogous results hold for all $\theta\in(0,\frac12)$.
Note that in both cases, the regularity of solutions provided by the energy estimate is not sufficient and the results are obtained by verifying some extra space-time regularity of the solutions although how this regularity is obtained is very different: in the case of structural damping, we derive an extra Lyapunov type estimate based on the multiplication of equation \eqref{eq dw} by $(-\Dx)^{1/2}u$ and in the weakly damped case it is achieved by utilizing the so-called Strichartz type estimate which are recently extended to the case of bounded domains, see \cite{stri,sogge1}. We discuss each of these two cases in more details below.
\section{Quintic wave equation: the case of structural damping $\theta=\frac12$}\label{sec3}
The key novelty here is the following theorem proved in \cite{ASqdw}.
\begin{The}\label{th main1} Let $\theta=\frac12$, $g\in L^2(\Omega)$ and the nonlinearity $f(u)$ be odd and satisfy \eqref{0.fext} with $q=4$. Then, any weak energy solution $u(t)$ of problem \eqref{eq dw} belongs to the space $L^2(0,T;H^{3/2}(\Omega))$ and satisfies the estimate
\begin{equation}\label{1.extra}
\|u\|_{L^2(t,t+1;H^{3/2}(\Omega))}\le Q(\|\xi_u\|_{L^\infty(t,t+1;\Cal E)})+Q(\|g\|_{L^2}),
\end{equation}
where the monotone function $Q$ is independent of $t$ and $u$.
\end{The}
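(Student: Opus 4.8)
The plan is to derive the extra $H^{3/2}$-regularity by multiplying the equation by $(-\Dx)^{1/2}u$, exactly as the authors announce in the introduction. Writing $v:=(-\Dx)^{1/2}u$, I would pair equation \eqref{eq dw} with $v$ in $L^2(\Omega)$. The point of this particular multiplier is that it converts the principal part $-\Dx u$ into $\|(-\Dx)^{3/4}u\|^2_{L^2}=\|u\|^2_{H^{3/2}}$ (up to equivalence of norms under Dirichlet boundary conditions), which is precisely the quantity we want to control. The damping term $\gamma(-\Dx)^{1/2}\Dt u$ also pairs nicely with $v=(-\Dx)^{1/2}u$ since $\theta=\frac12$: we get $\gamma\big((-\Dx)^{1/2}\Dt u,(-\Dx)^{1/2}u\big)=\tfrac{\gamma}{2}\tfrac{d}{dt}\|(-\Dx)^{1/2}u\|^2_{L^2}$, a perfect time derivative that is harmless after integration in $t$ and is controlled by the energy. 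This is the structural reason the argument is clean precisely at $\theta=\frac12$.

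Carrying this out, I would first handle the two second-order-in-time contributions. The term $(\Dt^2 u,v)=\big(\Dt^2 u,(-\Dx)^{1/2}u\big)$ should be integrated by parts in time to produce $\tfrac{d}{dt}\big(\Dt u,(-\Dx)^{1/2}u\big)-\big((-\Dx)^{1/4}\Dt u,(-\Dx)^{1/4}\Dt u\big)$; here I note $(-\Dx)^{1/4}\Dt u=(-\Dx)^{\theta/2}\Dt u$, so the "bad" term $-\|(-\Dx)^{1/4}\Dt u\|^2_{L^2}$ is exactly the dissipation integral appearing in the energy estimate \eqref{0.enest}, hence its time integral over $[t,t+1]$ is already bounded by $Q(\|\xi_u\|_{\Cal E})+Q(\|g\|_{L^2})$. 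The boundary term $\big(\Dt u,(-\Dx)^{1/2}u\big)$ is controlled by $\|\Dt u\|_{L^2}\|u\|_{H^1}$, again bounded by the energy. What remains to estimate after integrating the whole identity over $[t,t+1]$ is the nonlinear pairing $\int_t^{t+1}\big(f(u),(-\Dx)^{1/2}u\big)\,ds$ and the forcing term $\int_t^{t+1}\big(g,(-\Dx)^{1/2}u\big)\,ds$; the latter is trivial by Cauchy--Schwarz and the energy bound on $\|u\|_{H^1}$.

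The main obstacle, as always in the critical quintic case, is the nonlinear term $\big(f(u),(-\Dx)^{1/2}u\big)$. Since $q=4$, the Sobolev exponent is exactly borderline: $f(u)$ grows like $|u|^5$, which lies in $L^{6/5}$ when $u\in H^1_0\subset L^6$, and pairing against $(-\Dx)^{1/2}u$ (roughly $u$ with $1/2$ a derivative) is the delicate step. I expect the key trick to exploit the oddness of $f$ (hypothesized in the statement) together with the sign condition from \eqref{0.fext}. Writing things in terms of the commutator or using the identity $\big(f(u),(-\Dx)^{1/2}u\big)=\big((-\Dx)^{1/4}f(u),(-\Dx)^{1/4}u\big)$, one wants to bound this by $\|f(u)\|$-type quantities controlled via the potential $F(u)$ and the a priori $L^6$-bound on $u$, absorbing any borderline factor of $\|u\|_{H^{3/2}}$ into the good term $\|u\|^2_{H^{3/2}}$ on the left by Young's inequality with a small constant. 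The oddness likely enters to guarantee a favourable sign $\big(f'(u)\nabla u,\ldots\big)\ge -C$ after an integration by parts that moves one half-derivative onto $f(u)$, so that the fractional chain-rule estimate for $(-\Dx)^{1/4}f(u)$ closes with the correct power. The most technical point will be making this fractional estimate rigorous at the endpoint $q=4$ without losing the integrability margin, which is exactly why the quintic case was open.
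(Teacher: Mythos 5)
Your first two paragraphs coincide with the paper's argument: the multiplier $(-\Dx)^{1/2}u$, the identity whose time-derivative part is $\frac d{dt}\bigl((\Dt u,(-\Dx)^{1/2}u)+\frac\gamma2\|(-\Dx)^{1/2}u\|^2_{L^2}\bigr)$, the good term $\|(-\Dx)^{3/4}u\|^2_{L^2}$, and the observation that the term $\|(-\Dx)^{1/4}\Dt u\|^2_{L^2}$ appearing on the right is exactly the dissipation already controlled by \eqref{0.enest}. Up to the nonlinear pairing, your computation is the paper's.

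The gap is in the treatment of $(f(u),(-\Dx)^{1/2}u)$, which is the whole content of the theorem, and the route you sketch is the wrong one. You propose to move a quarter derivative onto $f(u)$, estimate $(-\Dx)^{1/4}f(u)$ by a fractional chain rule, and absorb a borderline factor of $\|u\|_{H^{3/2}}$ by Young's inequality. This cannot close at $q=4$: the pairing is of total degree six in $u$ while the absorbing term $\|(-\Dx)^{3/4}u\|^2_{L^2}$ is quadratic, and the energy bound $u\in L^\infty(H^1_0)\subset L^\infty(L^6)$ is critically insufficient to make up the difference --- this is precisely why the quintic case was open, and your own closing sentence concedes that this step is unresolved. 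The paper's key idea is to never estimate the \emph{size} of the nonlinear term, but to exploit its \emph{sign}: by the Fourier-series identity on the torus,
\begin{equation*}
(v,(-\D)^su)=c\int_{\R^3}\int_{\Bbb{T}^3}\frac{(v(x+h)-v(x))(u(x+h)-u(x))}{|h|^{3+2s}}\,dx\,dh,
\end{equation*}
applied with $v=f(u)$ and $s=\frac12$, the mean value theorem turns the numerator into $f'(\xi)\,|u(x+h)-u(x)|^2$, and the one-sided bound $f'\ge-K$ from \eqref{0.fext} yields
\begin{equation*}
(f(u),(-\Dx)^{1/2}u)\ge -K\|(-\Dx)^{1/4}u\|^2_{L^2},
\end{equation*}
a quantity controlled by the energy; note that no growth restriction on $f$ enters here at all. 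You also misplace the role of the oddness of $f$: it has nothing to do with producing a favourable sign in the nonlinear estimate; it is used only to reduce the Dirichlet problem to the periodic one by extending the solution oddly through the boundary (the displayed representation is a torus identity), combined with a cut-off procedure. So your proposal reproduces the paper's multiplier identity but replaces its actual key step --- the monotonicity argument via the Gagliardo-type representation of the fractional Laplacian --- with a direct-estimation strategy that fails at criticality.
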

\begin{proof}
We sketch the proof of this theorem for the case of periodic boundary conditions $\Omega=\Bbb T^3$. The case of Dirichlet boundary conditions can be reduced to that one using the odd extension of the solution $u$ through the boundary (to this end, we need the assumption that the nonlinearity $f$ is odd) together with a proper cut-off procedure, see \cite{ASqdw} for the details. To this end, we use the following identity obtained by multiplying \eqref{eq dw} by $(-\Dx)^{1/2}u$:
\begin{multline*}
\frac d{dt}\((\Dt u,(-\Dx)^{1/2}u)+\frac\gamma2\|(-\Dx)^{1/2}u\|^2_{L^2}\)+\|(-\Dx)^{3/4}u\|^2_{L^2}+\\+(f(u),(-\Dx)^{1/2}u)=(g,(-\Dx)^{1/2}u)+\|(-\Dx)^{1/4}\Dt u\|^2_{L^2}.
\end{multline*}
The integration of this identity over the interval $(t,t+1)$ gives the desired estimate \eqref{1.extra} in a straightforward way if we are able to estimate the term containing the nonlinearity $f$. To this end, we utilize the following lemma which can be verified using Fourier series and
the Parseval equality.
\begin{Lem}
Let $s\in(0,1)$ and $u$, $v\in H^s(\Bbb{T}^3)$, then
\begin{equation}
(v,(-\D)^su){\bf=}c\int_{\R^3}\int_{\Bbb{T}^3}\frac{(v(x+h)-v(x))(u(x+h)-u(x))}{|h|^{3+2s}}dxdh,
\end{equation}
where the constant $c$ depends only on $s$.
\end{Lem}
This lemma, together with the assumption $f'(u)\ge-K$ and the mean value theorem gives the desired estimate:
$$
(f(u),(-\Dx)^{1/2}u)\ge -K\|(-\Dx)^{1/4}u\|^2_{L^2}
$$
which completes the proof of the theorem in the case of periodic boundary conditions.
\end{proof}
With the proved extra regularity of energy solutions, their uniqueness and further regularity can be obtained in a standard way, so we omit the technicalities and only remind below the definition of the global attractor followed by the statement of the main result, proved in \cite{ASqdw}.

 \begin{Def}
{\rm A compact subset $\Cal A$ of a Banach space $\Cal E$ is called a {global attractor} for a semigroup $S(t):\Cal E\to\Cal E$, if}
\begin{enumerate}
\item {\rm $\Cal A$ is strictly invariant, i.e. $S(t)\Cal A=\Cal A$ $\forall t\geq 0$};
\item {\rm for any bounded set $B\subset \Cal E$ and any neighbourhood $\Cal{O}(\Cal A)$ of the set $\Cal A$, there exists a time $T=T(B,\Cal{O})$ such that}
$
S(t)B\subset\Cal{O}(\Cal A)
$
{\rm for all $t\geq T$}.
\end{enumerate}
\end{Def}
\begin{The}\label{Th1.attr} Let the assumptions of Theorem \ref{th main1} hold. Then, the energy solution of problem \eqref{eq dw} is unique and the solution semigroup $S(t)$ associated with problem \eqref{eq dw} in the phase space $\Cal E$ possesses a global attractor $\Cal A$ which is bounded in the more regular space $\Cal E_1:=[H^2(\Omega)\cap H^1_0(\Omega)]\times H^1_0(\Omega)$.
\end{The}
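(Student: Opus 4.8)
The plan is to deduce all three assertions---uniqueness, existence of the attractor, and its smoothness---from the additional space-time regularity \eqref{1.extra}, which is precisely the ingredient missing in the classical theory at the critical exponent $q=4$. I would begin with uniqueness. Given two energy solutions $u_1,u_2$ issued from the same data, the difference $w=u_1-u_2$ solves
\[
\Dt^2 w+\gamma(-\Dx)^{1/2}\Dt w-\Dx w=-(f(u_1)-f(u_2)),\qquad \xi_w(0)=0,
\]
and multiplying by $\Dt w$ gives the energy identity
\[
\tfrac12\tfrac{d}{dt}\(\|\Dt w\|^2_{L^2}+\|\Nx w\|^2_{L^2}\)+\gamma\|(-\Dx)^{1/4}\Dt w\|^2_{L^2}=-(f(u_1)-f(u_2),\Dt w).
\]
The only non-routine term is the right-hand side. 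Writing $f(u_1)-f(u_2)=\ell\,w$ with $|\ell|\le C(1+|u_1|^4+|u_2|^4)$, and using the dissipation $\Dt w\in L^2(0,T;H^{1/2})$ supplied by $\theta=\tfrac12$ together with the embeddings $H^{3/2}\subset L^p$ (every finite $p$) guaranteed by \eqref{1.extra}, I would bound this term, after H\"older's and Young's inequalities, by $\tfrac\gamma2\|(-\Dx)^{1/4}\Dt w\|^2_{L^2}+a(t)\,\mathcal E_w(t)$, where $\mathcal E_w$ is the energy of $w$ and the coefficient $a(t)$, assembled from norms of $u_1,u_2$ controlled by \eqref{1.extra} interpolated with the energy bound, lies in $L^1(0,T)$. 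Absorbing the dissipative term and applying Gronwall's inequality then forces $w\equiv0$, so that $S(t)\colon\Cal E\to\Cal E$ is a well-defined semigroup, Lipschitz continuous on bounded subsets of $\Cal E$.

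Next I would construct the attractor within the abstract theory of dissipative semigroups. The dissipative estimate \eqref{0.enest} already furnishes a bounded absorbing set $\Cal B\subset\Cal E$, so the remaining point is asymptotic compactness of $S(t)$ on $\Cal B$. Since for $\theta=\tfrac12$ the linear part generates an analytic semigroup with instantaneous smoothing, I would split $\xi_u(t)=\xi_v(t)+\xi_w(t)$, with $v$ solving the linear homogeneous problem and decaying exponentially in $\Cal E$, while the remainder absorbs $g-f(u)$ and, by the smoothing of the structural damping together with the control \eqref{1.extra}, stays uniformly bounded in a space compactly embedded into $\Cal E$. Asymptotic compactness follows, and the standard existence theorem yields the compact global attractor $\Cal A=\omega(\Cal B)$.

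Finally, for the smoothness $\Cal A\subset\Cal E_1$ I would use that, by strict invariance, $\Cal A$ is the set of bounded complete trajectories: along each of them $u\in L^\infty(\R;H^1_0)$, $\Dt u\in L^\infty(\R;L^2)$, and by \eqref{1.extra} also $u\in L^2_{loc}(\R;H^{3/2})$. Rewriting the equation as
\[
-\Dx u=g-f(u)-\Dt^2u-\gamma(-\Dx)^{1/2}\Dt u,
\]
the embedding $H^{3/2}\subset L^p$ turns the critical term $f(u)\sim|u|^5$ into an $L^2$-function, and the analytic smoothing controls the remaining terms on the right, so that elliptic regularity yields $u(t)\in H^2\cap H^1_0$ and $\Dt u(t)\in H^1_0$ uniformly, i.e.\ $\Cal A$ is bounded in $\Cal E_1$. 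The main obstacle throughout is the critical quintic growth: in every step the crux is to place the nonlinearity, or its increment, in a norm that closes the estimate---the time-integrability of the Gronwall coefficient $a(t)$ in the uniqueness step being the most delicate point---and in each case this is exactly what the extra $L^2H^{3/2}$ regularity of Theorem \ref{th main1} is designed to deliver.
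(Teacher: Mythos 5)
Your overall route coincides with the paper's: Theorem \ref{th main1} (the extra $L^2(H^{3/2})$ regularity) is taken as the sole new ingredient, and uniqueness, asymptotic compactness and smoothness are then extracted by otherwise standard machinery (the paper itself omits precisely these details, deferring them to \cite{ASqdw}). Your uniqueness step is correct and is where criticality genuinely enters: one estimates $|(f(u_1)-f(u_2),\Dt w)|\le C(1+\|u_1\|^4_{L^8}+\|u_2\|^4_{L^8})\|w\|_{L^6}\|\Dt w\|_{L^3}$, absorbs $\|\Dt w\|_{L^3}\le C\|(-\Dx)^{1/4}\Dt w\|_{L^2}$ into the dissipation, and notes that by interpolation $\|u_i\|^8_{L^8}\le C\|u_i\|^6_{H^1}\|u_i\|^2_{H^{3/2}}$ is locally integrable in time thanks to \eqref{1.extra}; so your Gronwall coefficient $a(t)$ is indeed in $L^1(0,T)$ and $w\equiv0$ follows.

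The genuine gap is in your last step, the $\Cal E_1$-bound on the attractor. Writing $-\Dx u=g-f(u)-\Dt^2u-\gamma(-\Dx)^{1/2}\Dt u$ and invoking elliptic regularity is circular: to have the right-hand side in $L^2$ you need precisely $(-\Dx)^{1/2}\Dt u(t)\in L^2$, i.e. $\Dt u(t)\in H^1$, which is the second component of the $\Cal E_1$-bound you are trying to prove, and $\Dt^2u(t)\in L^2$ is equally unknown at this stage (energy regularity only places $\Dt^2u$ in a negative-order space). Moreover, elliptic regularity for $u$ can never output the assertion $\Dt u(t)\in H^1_0$; that component requires its own argument (e.g. energy estimates for the time-differentiated equation). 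The correct ``standard bootstrapping'' proceeds in fractional steps along complete bounded trajectories: from \eqref{1.extra} and interpolation one gets, for instance, $\|f(u)\|_{L^{3/2}}\le C(1+\|u\|^4_{H^1}\|u\|_{H^{3/2}})\in L^2_{loc}(\R)$, hence $f(u)\in L^2_{loc}(\R;H^{-1/2})$, and the maximal regularity of the analytic semigroup generated by the linear part (available for $\theta=\frac12$, cf. the table in Section \ref{sec1}) then lifts $\xi_u$ from $\Cal E$ to an intermediate space of fractional order uniformly in time, after which the argument is iterated until $\Cal E_1$ is reached. The same caveat applies to your asymptotic-compactness splitting: \eqref{1.extra} only yields $f(u)\in L^1_{loc}(L^2)$ (since $\|f(u)\|_{L^2}\le C(1+\|u\|^3_{H^1}\|u\|^2_{H^{3/2}})$), and an $L^1$-in-time forcing does not give a one-shot Duhamel smoothing of the remainder into a space compactly embedded in $\Cal E$; there too the compactness must be gained in fractional increments.
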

 \begin{Rem} {\rm The existence of exponential attractor bounded in $\Cal E_1$ that, in particular, implies finite fractal dimension of the global attractor $\Cal A$  is also verified in \cite{ASqdw}}.
\end{Rem}

\section{Quintic wave equation: the case of weak damping $\theta=0$}\label{sec4}
The extra space-time regularity of energy solutions is based here on the following non-trivial result concerning Strichartz type estimates for the linear wave equation in a bounded domain, see  \cite{sogge1, stri}.
\begin{Lem}
\label{Th LinStr}
Let $\xi_0\in\Cal E$, $G(t)\in L^1([0,T];L^2(\Omega))$ and $v$ solves the linear equation
\begin{equation}
\Dt^2 v-\Delta v =G(t),\quad v|_{\partial{\Omega}}=0,\ \xi_v|_{t=0}=\xi_0.
\end{equation}
 Then $v\in L^4([0,T];L^{12}(\Omega))$ and the following estimate holds:
\begin{equation}
\|v\|_{L^4([0,T];L^{12}(\Omega))}\leq C_T(\|\xi_0\|_\Cal{E}+\|G\|_{L^1([0,T];L^2(\Omega))}),
\end{equation}
where $C$ may depend on $T$ but it is independent of $\xi_0$ and $G$.
\end{Lem}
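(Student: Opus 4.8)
The plan is to reduce the full estimate to the homogeneous Strichartz inequality for the free Dirichlet wave group on $\Omega$ and to recover the forcing term by Duhamel's principle. Writing $\xi_0=(v_0,v_0')$ and
\begin{equation*}
v(t)=\cos(t\sqrt{-\Delta})v_0+\frac{\sin(t\sqrt{-\Delta})}{\sqrt{-\Delta}}\,v_0'+\int_0^t\frac{\sin((t-s)\sqrt{-\Delta})}{\sqrt{-\Delta}}\,G(s)\,ds,
\end{equation*}
where $\sqrt{-\Delta}$ denotes the square root of the Dirichlet Laplacian, I note that the integrand carries Cauchy data $(0,G(s))$ whose energy norm is exactly $\|G(s)\|_{L^2}$. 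Hence, once the homogeneous bound is available for arbitrary data in $\Cal E$, an application of Minkowski's integral inequality in the $s$-variable controls the forcing contribution by $\int_0^T\|G(s)\|_{L^2}\,ds$; since the time exponent $p=4$ is non-endpoint, no appeal to the Christ--Kiselev lemma is needed. Thus the whole statement reduces to the homogeneous estimate $\|\cos(t\sqrt{-\Delta})v_0+\tfrac{\sin(t\sqrt{-\Delta})}{\sqrt{-\Delta}}v_0'\|_{L^4([0,T];L^{12})}\le C_T\|\xi_0\|_{\Cal E}$.

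For this homogeneous estimate I would first record the admissibility bookkeeping: in dimension $n=3$ the pair $(p,q)=(4,12)$ obeys the scaling relation $\tfrac1p+\tfrac nq=\tfrac14+\tfrac3{12}=\tfrac12=\tfrac n2-s$ with $s=1$, so the energy regularity $\xi_0\in\Cal E$ is exactly the critical one, and it additionally satisfies the stronger \emph{boundary}-admissibility condition $\tfrac3p+\tfrac{n-1}q=\tfrac34+\tfrac16=\tfrac{11}{12}\le1=\tfrac{n-1}2$. On $\R^3$ (and on $\T^3$) the estimate is classical: after a Littlewood--Paley decomposition one establishes the frequency-localized dispersive bound $\|e^{\pm it\sqrt{-\Delta}}P_\lambda f\|_{L^\infty}\lesssim\lambda^3(1+\lambda|t|)^{-1}\|f\|_{L^1}$ and feeds it into the $TT^*$ argument together with the Hardy--Littlewood--Sobolev inequality, then sums the dyadic pieces using the gain inherent in the non-endpoint exponent.

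The genuine difficulty is the boundary $\partial\Omega$, where the global-in-space dispersive estimate fails, since reflected and, above all, grazing rays create caustics that destroy the pointwise decay of the wave kernel. The plan is to argue microlocally and locally in time: by finite propagation speed one localizes to small space-time balls, treating interior balls with the flat parametrix as above and balls meeting $\partial\Omega$ with the Melrose--Taylor parametrix combined with the square-function and spectral-cluster estimates of Smith--Sogge. This is precisely the content of \cite{stri, sogge1}: for $n=3$ the condition $\tfrac3p+\tfrac{n-1}q\le\tfrac{n-1}2$ is exactly what allows the grazing-ray contributions to be absorbed with no loss of derivatives relative to the energy norm, and $(4,12)$ lies (non-sharply) inside this range. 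Summing the local pieces over a finite boundary cover and over dyadic frequencies, and combining with the Duhamel reduction above, yields the claim. I expect the construction and control of the boundary parametrix near grazing directions to be the main obstacle, the remaining steps being the standard Strichartz machinery.
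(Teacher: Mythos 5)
Your proposal is correct and is consistent with the paper's own treatment: the paper does not prove this lemma at all but quotes it directly from \cite{stri,sogge1}, which is exactly where you place the genuinely hard part (the boundary parametrix near grazing rays), while your surrounding reductions --- the Duhamel/Minkowski handling of the $L^1([0,T];L^2)$ forcing, correctly noting that no Christ--Kiselev lemma is needed for this non-dual forcing norm, and the admissibility bookkeeping $\tfrac1p+\tfrac3q=\tfrac12$ with $s=1$ and $\tfrac3p+\tfrac{n-1}{q}=\tfrac{11}{12}\le\tfrac{n-1}{2}$ for $(p,q)=(4,12)$ --- are accurate and standard. In short, you have correctly reconstructed how the cited references establish the result, and deferring the microlocal boundary analysis to \cite{stri,sogge1} is precisely what the paper itself does.
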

However, in contrast to the previous case, we do not know whether or not {\it all} energy solutions possess this extra regularity. For this reason, we restrict ourselves to consider only such energy solutions $u(t)$ of \eqref{eq dw} which belong to the space $L^4(0,T;L^{12}(\Omega))$. We will refer in the sequel to such solutions as {\it Shatah-Struwe} solutions of problem \eqref{eq dw}.
\par
The next theorem which gives the global well-posedness of problem \eqref{eq dw} in the class of Shatah-Struwe solutions can be proved with the help of the so-called Pohozhaev-Morawetz identity and the above mentioned Strichartz estimate, see \cite{stri}.

\begin{The}\label{Th.exist} Let $\theta=0$, $g\in L^2(\Omega)$ and the nonlinearity $f$ satisfy \eqref{0.fext} with $q=4$ as well as the following extra assumptions:
$$
1. \ |f''(u)|\le C(1+|u|^3),\ \ 2.\ \  f(u)u-4F(u)\ge-C.
$$
Then, for any $(u_0,u_0')\in\Cal E$, there exists a unique Shatah-Struwe solution $u(t)$ of problem \eqref{eq dw} defined for all $t\ge0$.
\end{The}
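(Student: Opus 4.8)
The plan is to establish global well-posedness of Shatah–Struwe solutions by combining the linear Strichartz estimate of Lemma \ref{Th LinStr} with the energy identity and a Pohozhaev–Morawetz type identity, following the classical Shatah–Struwe scheme adapted to the bounded domain setting. First I would treat the nonlinearity $f(u)$ as a source term $G(t) = g - f(u(t))$ in the linear wave equation and use Lemma \ref{Th LinStr} to recover the Strichartz norm from the $L^1(0,T;L^2)$ norm of this source. The key local computation is to bound $\|f(u)\|_{L^1(0,T;L^2)}$ using the growth condition \eqref{0.fext}: since $q=4$, one has $|f(u)| \le C(1+|u|^5)$, so that $\|f(u)\|_{L^2} \lesssim 1 + \||u|^5\|_{L^2} = 1 + \|u\|_{L^{10}}^5$, and the Strichartz-controllable quantity $\|u\|_{L^4(0,T;L^{12})}$ together with the energy bound on $\|u\|_{L^\infty(0,T;L^6)}$ (via $H^1_0 \subset L^6$) should interpolate to control $\|u\|_{L^{10}}$ by splitting the fifth power into Strichartz-integrable and energy-bounded factors. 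This closes a local existence argument on a short interval $[0,T_*]$ via a standard fixed-point/contraction scheme in the space $C([0,T_*];\Cal E) \cap L^4(0,T_*;L^{12})$, giving a unique Shatah–Struwe solution locally in time.

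The second step is the global-in-time extension, which is where the Pohozhaev–Morawetz identity enters. The energy identity \eqref{0.e} gives the uniform bound $\|\xi_u(t)\|_{\Cal E} \le Q(\|\xi_u(0)\|_{\Cal E}) + Q(\|g\|_{L^2})$ on any interval of existence, so the energy norm cannot blow up. The delicate point is that the energy bound alone does not a priori control the Strichartz norm on long intervals: one needs to rule out concentration of the $L^4(0,T;L^{12})$ norm, i.e. show that $\|u\|_{L^4(0,T;L^{12})}$ stays finite up to any finite $T$ so that the local solution can be continued. This is precisely what the Pohozhaev–Morawetz identity, combined with assumption 2 ($f(u)u - 4F(u) \ge -C$), is designed to deliver: it provides the noncentration/nonconcentration estimate that prevents energy from focusing at a point in a self-similar manner, which is the mechanism by which blow-up of the Strichartz norm would otherwise occur in the critical quintic case.

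\emph{The hard part will be} the nonconcentration argument itself. In the quintic energy-critical case the scaling is exactly balanced, so the mere finiteness of the energy is not enough to preclude concentration; one genuinely needs the virial/Morawetz-type monotonicity to extract a quantitative decay of the energy inside shrinking light cones. Assumption 1 ($|f''(u)| \le C(1+|u|^3)$) is the regularity hypothesis ensuring $f$ is smooth enough for the fixed-point contraction and for differentiating the Morawetz quantity, while assumption 2 guarantees the correct sign in the flux term of the Pohozhaev–Morawetz identity. The boundary of $\Omega$ introduces an additional technical difficulty absent on $\R^3$: the Morawetz multiplier must be adapted to the geometry of $\partial\Omega$, and one must control the boundary flux terms, which is exactly the subtlety handled by the bounded-domain Strichartz estimates of \cite{stri,sogge1}. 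Once nonconcentration is established, a continuation argument shows the Strichartz norm remains finite on every $[0,T]$, upgrading the local solution to a global one; uniqueness follows from the contraction estimate applied to the difference of two Shatah–Struwe solutions, using the Strichartz regularity to control the difference of the nonlinear terms.
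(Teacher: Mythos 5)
Your proposal follows exactly the route the paper indicates: the paper gives no proof of this theorem beyond stating that it can be proved with the Pohozhaev--Morawetz identity and the Strichartz estimate of Lemma \ref{Th LinStr} (citing \cite{stri}), and your scheme --- local existence by contraction in $C([0,T_*];\Cal E)\cap L^4(0,T_*;L^{12}(\Omega))$ using the interpolation $\|u\|_{L^{10}}\le\|u\|_{L^6}^{1/5}\|u\|_{L^{12}}^{4/5}$, globalization via Morawetz-type nonconcentration with the sign condition $f(u)u-4F(u)\ge -C$, and uniqueness from the difference estimate in Strichartz norms --- is precisely the Shatah--Struwe/Burq--Lebeau--Planchon argument being cited. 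One minor correction: since $\theta=0$ the equation contains the damping $\gamma\Dt u$, so the source fed into Lemma \ref{Th LinStr} should be $G=g-f(u)-\gamma\Dt u$ rather than $g-f(u)$; this is harmless because $\|\gamma\Dt u\|_{L^1(0,T;L^2(\Omega))}\le \gamma T\,\|\Dt u\|_{L^\infty(0,T;L^2(\Omega))}$ is controlled by the energy estimate \eqref{0.enest}.
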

Thus, the (Shatah-Struwe) solution semigroup $S(t):\Cal E\to\Cal E$ associated with problem \eqref{eq dw} is well-defined. Moreover, these Shatah-Struwe solutions have  a number of good properties: they satisfy the energy identity \eqref{0.e} and the dissipative estimate \eqref{0.enest}; they are more regular, say, $\xi_u(t)\in\Cal E_1$ for all $t$ if $\xi_u(0)\in\Cal E_1$, etc.
\par
However, in contrast to the previous case, we do not know whether or not the analogue of estimate \eqref{1.extra} holds for the Strichartz norm $\|u\|_{L^4(t,t+1;L^{12}(\Omega))}$ and the previous theorem actually does not give any control of this norm as $t\to\infty$. Thus, the control of the Strichartz norm may be a priori lost when passing to the limit $t\to\infty$ and even if we initially consider the Shatah-Struwe solutions only, the other types of energy solutions (for which we have neither energy equality nor the uniqueness theorem) may a priori appear on the attractor.
\par
For this reason, despite the fact that the considered Shatah-Struwe solutions are unique, as an intermediate step, we need to exploit the existence of a weak attractor in the class of energy solutions where the uniqueness theorem is not known. Namely, as proved in \cite{ZelDCDS}, if we restrict ourselves to consider the energy solutions which can be obtained as a limit of Galerkin approximations only, then the trajectory dynamical system associated with these solutions possesses a trajectory attractor in the weak-star topology of $L^\infty_{loc}(\R_+,\Cal E)$ and (which is almost the same), the {\it multi-valued} semigroup $\bar S(t)$ associated with these solutions (the uniqueness is not known for that type of solutions) possesses a global attractor $\Cal A_w$ in a weak topology of~$\Cal E$, see also \cite{CV} for more details. Moreover, as shown in \cite{ZelDCDS} (see also \cite{Zel1,Zel2}),
the solutions on the attractor $\Cal A_w$ possesses the following backward regularity.

\begin{The}\label{Th2.weak} Under the above assumptions, the global attractor $\Cal A_w$ is generated by complete energy solutions $u(t)$, $t\in\R$ which are bounded in $\Cal E$ for all $t\in\R$. Moreover, for any such solution $u$, there exists $T=T(u)$ such that
$u(t)\in\Cal E_1$, for  $t\le -T$
and
$$
\|u\|_{L^\infty(-\infty,-T;\Cal E_1)}\le C,
$$
where the constant $C$ is independent of $u$.
\end{The}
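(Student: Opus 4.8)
The plan is to read the structural description of $\mathcal{A}_w$ off the general theory of attractors and then to exploit the gradient structure of the weakly damped equation together with elliptic regularity of its equilibria. First I would invoke the standard characterization of the global attractor of a (multivalued) semigroup: $\mathcal{A}_w$ is the union of the values $u(0)$ of all complete energy solutions $u:\mathbb{R}\to\mathcal{E}$ that remain bounded in $\mathcal{E}$. The uniform-in-$u$ bound comes directly from the dissipative estimate \eqref{0.enest}: written with an arbitrary initial time $s$ it reads $\|\xi_u(t)\|_{\mathcal{E}}\le Q(\|\xi_u(s)\|_{\mathcal{E}})\,e^{-\alpha(t-s)}+Q(\|g\|_{L^2})$ for $t\ge s$, and letting $s\to-\infty$ along a bounded complete trajectory annihilates the first term, leaving $\|\xi_u(t)\|_{\mathcal{E}}\le Q(\|g\|_{L^2})$ for all $t\in\mathbb{R}$, with constant independent of the trajectory.

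Next I would use that, since $\theta=0$, the energy identity \eqref{0.e} becomes $\tfrac{d}{dt}E(u,\partial_t u)=-\gamma\|\partial_t u\|_{L^2}^2\le 0$, so $E$ is a strict Lyapunov functional. For a complete bounded trajectory, $E$ is bounded and monotone, hence it has finite limits at $\pm\infty$ and $\gamma\int_{\mathbb{R}}\|\partial_t u(\tau)\|_{L^2}^2\,d\tau=E(-\infty)-E(+\infty)<\infty$. In particular the dissipation tail $\int_{-\infty}^{-T}\|\partial_t u\|_{L^2}^2\,d\tau$ is as small as we like for $T$ large, and a LaSalle-type argument forces the $\alpha$-limit set of the trajectory to consist of equilibria, i.e. of solutions of the stationary problem $-\Delta v+f(v)=g$.

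I would then establish that the set $\mathcal{R}$ of equilibria is bounded in $\mathcal{E}_1$. This is a regularity statement for the critical semilinear elliptic equation $-\Delta v+f(v)=g$ with $v\in H^1_0\cap L^6$: writing the nonlinear term as $f(v)=Vv$ with potential $V:=f(v)/v=O(1+|v|^4)$, so that $V\in L^{3/2}$ by the embedding $H^1_0\subset L^6$ on the bounded domain $\Omega$, a Brezis--Kato/Moser bootstrap applied to $-\Delta v+Vv=g$ gives $v\in L^p$ for every $p<\infty$, whence $f(v)\in L^2$ and $v\in H^2$, uniformly on the compact set $\mathcal{R}$. The extra hypotheses on $f$, in particular $|f''(u)|\le C(1+|u|^3)$, are exactly what keep this iteration consistent at the quintic/critical exponent.

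The hard part will be transferring the regularity from the limiting equilibria to the whole tail $\{t\le -T\}$ and making the resulting bound uniform in $u$. The obstruction is the criticality of $f$: at the energy level one only controls $f(u)\in L^{6/5}$, which is insufficient both for applying $H^2$ elliptic regularity to the identity $-\Delta u+f(u)=g-\partial_t^2 u-\gamma\partial_t u$ and for a Duhamel bootstrap from $-\infty$ based on Lemma \ref{Th LinStr}. The strategy to overcome this is to use that, backward in time, the whole right-hand side $g-\partial_t^2 u-\gamma\partial_t u$ differs from the stationary data $g$ only by a term small in the dissipation integral, so that the trajectory stays in a thin $\mathcal{E}$-neighbourhood of the regular set $\mathcal{R}$ for all $t\le -T$; combining this closeness with the smoothing of the linear damped-wave propagator and the structural assumptions on $f$ should absorb the critical term and upgrade the energy bound on the tail to the desired estimate $\|u\|_{L^\infty(-\infty,-T;\mathcal{E}_1)}\le C$.
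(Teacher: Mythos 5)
Note first that the paper itself does not prove this theorem: it is quoted from \cite{ZelDCDS}, so the only comparison available is with the strategy of that cited work. The first half of your proposal (characterization of $\mathcal{A}_w$ by bounded complete trajectories, backward convergence to equilibria via the Lyapunov structure, Brezis--Kato regularity of the equilibria) is indeed consistent with that strategy. But there are two genuine gaps. First, you invoke the energy \emph{identity} \eqref{0.e} as a strict Lyapunov functional for the complete energy solutions generating $\mathcal{A}_w$. In this critical setting that is exactly what is not available: the paper stresses that for the energy solutions forming the weak attractor ``we have neither energy equality nor the uniqueness theorem'', and energy equality on $\mathcal{A}_w$ is recovered only \emph{after} the inclusion $\mathcal{A}_w\subset\mathcal{E}_1$ has been established -- which is what one is trying to prove. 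The same caveat applies to your use of the dissipative estimate \eqref{0.enest}: it is guaranteed only for the solutions constructed in Theorem \ref{Th0.ensol}, not for arbitrary weak solutions. Both steps can be repaired, but only by restricting from the outset to Galerkin-limit solutions, for which an energy \emph{inequality} and the dissipative estimate survive the limit; this restriction is precisely why the paper (following \cite{ZelDCDS}) builds $\mathcal{A}_w$ in that class, and your argument must be rerun with the inequality in place of the identity.

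Second, and more seriously, the step you yourself call the hard part -- upgrading the energy bound on the tail $t\le -T$ to a uniform $\mathcal{E}_1$ bound -- is not an argument but a hope, and the mechanism you propose for it cannot work. You appeal to ``the smoothing of the linear damped-wave propagator'', but for $\theta=0$ the linear semigroup is merely $C_0$, with no instantaneous smoothing and no maximal regularity (see the table in the introduction of this very paper). Moreover, closeness of the trajectory to the regular set $\mathcal{R}$ in the $\mathcal{E}$-topology does not by itself subordinate the quintic term: writing $u=v+w$ with $v$ a nearby equilibrium, the difference $w$ satisfies a wave equation whose nonlinearity $f(v+w)-f(v)$ is still critical in $w$, and smallness of $\|w\|_{\mathcal{E}}$ gives no control of $\|f(v+w)-f(v)\|_{L^1(t,t+1;L^2)}$ nor of any Strichartz norm of $w$ -- the absence of a dissipative Strichartz estimate is the central obstruction of this whole section, and it does not disappear near $\mathcal{R}$. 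The actual proof in \cite{ZelDCDS} supplies at this point a delicate bootstrapping/splitting argument near the smooth equilibria set, exploiting quantitatively the smallness of the dissipation integral together with the regularity of $\mathcal{R}$; that argument is the substance of the theorem, and it is missing from your proposal.
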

Combining the result of Theorem \ref{Th.exist} with the above backward regularity is enough to verify the analogue of Theorem \ref{Th1.attr} for that case as well.
\begin{The}Let the assumptions of Theorem \ref{Th.exist} holds. Then the (Shatah-Struwe) solution semigroup $S(t)$ associated with problem \eqref{eq dw} possesses a global attractor $\Cal A$ in $\Cal E$ which is a bounded set in $\Cal E_1$.
\end{The}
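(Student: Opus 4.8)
The plan is to show that the weak attractor $\Cal A_w$ produced by Theorem \ref{Th2.weak} is itself the strong global attractor $\Cal A$ of the Shatah--Struwe semigroup $S(t)$ in $\Cal E$, and that it is bounded in $\Cal E_1$. Three things must then be checked in the strong topology of $\Cal E$: that $\Cal A_w$ is bounded in $\Cal E_1$ (hence strongly compact), that it is strictly invariant, $S(t)\Cal A_w=\Cal A_w$, and that it attracts bounded sets. Dissipativity is immediate: every Shatah--Struwe solution satisfies \eqref{0.enest}, so $S(t)$ has a bounded absorbing ball $\Cal B\subset\Cal E$; moreover, since Shatah--Struwe solutions belong to the larger class of Galerkin-limit energy solutions for which $\Cal A_w$ was built, the orbits $S(t)\xi$ are already attracted by $\Cal A_w$ in the \emph{weak} topology of $\Cal E$.

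First I would establish boundedness of $\Cal A_w$ in $\Cal E_1$. By Theorem \ref{Th2.weak} each point of $\Cal A_w$ lies on a complete bounded trajectory $u$ which, for $t\le -T(u)$, remains in a fixed ball $\Cal B_1\subset\Cal E_1$ of radius $C$ independent of $u$. Choosing $s_0\le -T(u)$, the regular datum $\xi_u(s_0)\in\Cal E_1$ launches, by the forward propagation of $\Cal E_1$-regularity noted after Theorem \ref{Th.exist} and by uniqueness of Shatah--Struwe solutions, precisely the orbit $t\mapsto S(t-s_0)\xi_u(s_0)$, which therefore coincides with $u$ for all $t\ge s_0$. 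Consequently every point of $\Cal A_w$ has the form $S(\tau)w$ with $w\in\Cal B_1$ and $\tau\ge0$, and the dissipative estimate in $\Cal E_1$ valid for these regular solutions bounds $\|S(\tau)w\|_{\Cal E_1}$ uniformly in $\tau\ge0$, its exponential term absorbing the dependence on $\tau$. Thus $\Cal A_w$ is bounded in $\Cal E_1$, hence precompact and (being closed) compact in the strong topology of $\Cal E$ via the compact embedding $\Cal E_1\hookrightarrow\Cal E$. Strict invariance $S(t)\Cal A_w=\Cal A_w$ follows from the shift-invariance of the family of complete bounded trajectories together with the existence of a backward continuation of each point inside $\Cal A_w$.

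The genuinely delicate step is to upgrade the available weak attraction to strong attraction in $\Cal E$: because the Strichartz norm $\|u\|_{L^4(t,t+1;L^{12})}$ is not controlled as $t\to\infty$, the usual smoothing or linear/nonlinear splitting arguments that would yield strong asymptotic compactness are unavailable, and this is exactly why the detour through $\Cal A_w$ and its backward regularity is forced. I would run the energy-equation method of Ball. Take $b_n\in\Cal B$, $t_n\to\infty$ with $S(t_n)b_n\rightharpoonup\xi^\ast\in\Cal A_w$, and extend backward to obtain, along a subsequence, a complete limit trajectory on $\Cal A_w$ through $\xi^\ast$ on which the identity \eqref{0.e} holds with equality. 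Since $\Cal E=H^1_0\times L^2$ is a Hilbert space in the quintic case, strong convergence is equivalent to $\|S(t_n)b_n\|_{\Cal E}\to\|\xi^\ast\|_{\Cal E}$; weak lower semicontinuity gives the $\liminf$ inequality, so only the matching $\limsup$ remains. Writing the quadratic part through $E=\tfrac12\|\cdot\|_{\Cal E}^2+(F(u),1)-(g,u)$, the term $(g,u_n)$ passes to the limit by strong $L^2$-convergence of $u_n$ (compact embedding), while the critical potential $(F(u),1)$, bounded below by \eqref{0.fext}, is handled by Fatou's lemma, which supplies the inequality in the needed direction. Multiplying \eqref{eq dw} by $\Dt u+\beta u$ yields a dissipative differential inequality $\tfrac{d}{dt}\Phi+\alpha\Phi\le H(t)$ for a functional $\Phi$ equivalent to the energy; integrated over a backward window $[-L,0]$, its exponential factor suppresses the uniformly bounded contribution at $t=-L$, and letting $n\to\infty$ and then $L\to\infty$ closes the $\limsup$ estimate.

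The main obstacle is thus this asymptotic-compactness step: the critical growth $f(u)\sim u^5$ obstructs passage to the limit in the nonlinear terms under mere weak convergence, and only the one-sided Fatou bound on the potential together with the $\Cal E_1$-regularity of $\Cal A_w$ makes the energy method close. Combining these ingredients identifies $\Cal A:=\Cal A_w$ as the global attractor of $S(t)$ in $\Cal E$, bounded in $\Cal E_1$, which is the assertion of the theorem.
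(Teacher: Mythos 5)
Your overall strategy coincides with the paper's: pass through the weak (Galerkin) attractor $\Cal A_w$, combine the backward regularity of Theorem \ref{Th2.weak} with the forward propagation of $\Cal E_1$-regularity of Shatah--Struwe solutions to get $\Cal A_w\subset\Cal E_1$, deduce the energy equality for trajectories on the attractor, and obtain strong asymptotic compactness by the energy method of Ball/Moise--Rosa--Wang; your final identification $\Cal A=\Cal A_w$ is a harmless repackaging of the paper's concluding regularity step, and your treatment of the critical potential (Fatou plus the favourable sign, with the limit trajectory satisfying the exact energy identity) is consistent with what the paper intends.

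There is, however, one genuine gap, and it sits at the linchpin of the argument. To conclude that a complete trajectory $u$ on $\Cal A_w$ coincides for $t\ge s_0$ with the Shatah--Struwe orbit $S(t-s_0)\xi_u(s_0)$, you invoke ``uniqueness of Shatah--Struwe solutions''. But the uniqueness of Theorem \ref{Th.exist} only compares two solutions \emph{both} known to lie in $L^4_{loc}(L^{12})$, whereas $u$ is merely an energy solution obtained as a Galerkin limit; nothing guarantees a priori that it possesses the Strichartz regularity --- indeed, the whole difficulty of the $\theta=0$ case, as the paper stresses, is that this regularity is not known to persist as $t\to\infty$, which is why energy solutions without Strichartz control could a priori appear on the attractor. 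What is actually needed is a \emph{weak--strong uniqueness} theorem: a Galerkin energy solution whose initial datum launches a regular ($\Cal E_1$) Shatah--Struwe solution must coincide with that solution. This is a separate, nontrivial ingredient which the paper explicitly lists (``the proper version of weak-strong uniqueness'') and defers to \cite{KSZ}; without it your identification fails, and with it collapse both the $\Cal E_1$-boundedness of $\Cal A_w$ and the strict invariance $S(t)\Cal A_w=\Cal A_w$. The same omission infects your earlier claim that Shatah--Struwe solutions ``belong to the larger class of Galerkin-limit energy solutions'' (used to get weak attraction of $S(t)$-orbits by $\Cal A_w$): that inclusion is again precisely a weak--strong uniqueness statement, not a consequence of uniqueness within the Shatah--Struwe class. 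Once this ingredient is restored, your proof goes through and is essentially the paper's.
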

Indeed, combining the aforementioned backward regularity, the fact that for any initial data $\xi_0\in\Cal E_1$, the corresponding Shatah-Struwe solution remains in $\Cal E_1$ and the proper version of weak-strong uniqueness, we establish that the weak attractor $\Cal A_w\subset \Cal E_1$, see \cite{KSZ} for more details. Thus, the energy equality holds for any solution belonging to this attractor. The asymptotic compactness of the solution semigroup in $\Cal E$ can be then established using the so-called energy method, see \cite{ball,rosa}. Finally, the additional regularity of the global attractor $\Cal A$ is based on this asymptotic compactness using more or less standard bootstrapping arguments, see \cite{KSZ}.

\begin{Rem}
\label{rem1 sub}
{\rm In the {\it subcritical} case $q<4$, the following analogue of the dissipative estimate for the Strichartz norm holds:}
\begin{equation}
\|u\|_{L^4([t,t+1];L^{12}(\Omega))}\leq Q(\|\xi_0\|_\Cal E)e^{-\alpha t}+Q(\|g\|),
\end{equation}
{\rm where the monotone function $Q$ is independent of $t$ and $u$ and the attractor theory in this case is essentially simpler, see \cite{KSZ}.}
\end{Rem}

{\small

}\nopagebreak
{\small
{\em Authors' addresses}:\nopagebreak\\\nopagebreak
{\em Anton Savostianov}, University of Surrey, Guildford, UK,\nopagebreak\\\nopagebreak
 e-mail: \texttt{a.savostianov@\allowbreak surrey.ac.uk}\nopagebreak\\\nopagebreak
 {\em Sergey Zelik}, University of Surrey, Guildford, UK,\nopagebreak\\\nopagebreak
 e-mail: \texttt{s.zelik@\allowbreak surrey.ac.uk}
}

\end{document}